\documentclass[preprint]{elsarticle}

\usepackage{dutchcal}
\usepackage{dsfont}
\usepackage{bbold} 
\usepackage[font=small,labelfont=bf]{caption}
\usepackage{xcolor}
\usepackage[normalem]{ulem}
\usepackage{tabularx}
\journal{European Journal of Combinatorics}









\bibliographystyle{elsarticle-num}

\usepackage{amsthm}
\usepackage{amsmath}
\usepackage{amssymb}
\usepackage{epigraph}
\usepackage{graphicx}
\usepackage{amsmath}
\usepackage{subfigure}
\usepackage{xspace}
\usepackage{color, soul}

\usepackage[colorinlistoftodos,prependcaption,textsize=tiny]{todonotes}

\newcommand{\Fraisse}{Fra\"{i}ss\'{e}\xspace} 

\newcommand{\N}{\mathbb{N}} 


\newcommand{\HH}{{\bf HH}\xspace} 
\newcommand{\MH}{{\bf MH}\xspace} 
\newcommand{\MM}{{\bf MM}\xspace} 

\newcommand{\aut}[1]{\mathrm{Aut}(#1)}

\newcommand{\trgn}[1]{{\normalfont(}$\Diamond_{#1}${\normalfont )}\xspace}
\newcommand{\Rad}{\mathcal R}

\newtheorem{theorem}{Theorem}
\newtheorem{lemma}[theorem]{Lemma}
\newtheorem{proposition}[theorem]{Proposition}
\newtheorem{definition}[theorem]{Definition}
\newtheorem{problem}[theorem]{Problem}
\newdefinition{remark}{Remark}
\newdefinition{fact}{Fact}
\newdefinition{example}{Example}

\newtheorem{observation}[theorem]{Observation}
\newtheorem{claim}[theorem]{Claim}

\begin{document}
\begin{frontmatter}

\title{Morphism extension classes of countable $L$-colored graphs}

\author[ia]{Andr\'{e}s Aranda}
\address[ia]{Institut f\"ur Algebra, Technische Universit\"{a}t Dresden, Zellescher Weg 12-14, Dresden}
\ead{andres.aranda@gmail.com}

\author[iuuk,cas]{David Hartman}
\address[iuuk]{Computer Science Institute of Charles University, Charles University, Malostransk\'e n\'{a}m. 25, Prague 1}
\address[cas]{Institute of Computer Science of the Czech Academy of Sciences, Pod Vod\'{a}renskou v\v{e}\v{z}\'{i} 271/2, Prague 8}
\ead{hartman@iuuk.mff.cuni.cz}

\begin{abstract}
In~\cite{Hartman:2014}, Hartman, Hubi\v cka and Ma\v sulovi\'c studied the hierarchy of morphism extension classes for finite $L$-colored graphs, that is, undirected graphs without loops where sets of colors selected from $L$ are assigned to vertices and edges. They proved that when $L$ is a linear order, the classes $\MH_L$ and $\HH_L$ coincide, and the same is true for vertex-uniform finite $L$-colored graphs when $L$ is a diamond. In this paper, we explore the same question for countably infinite $L$-colored graphs. We prove that $\MH_L=\HH_L$ if and only if $L$ is a linear order.
\end{abstract}

\begin{keyword}
homomorphism-homogeneity \sep $L$-colored graphs \sep morphism-extension classes \sep \Fraisse class
\MSC[2010] 05-05C75\sep  05C60
\end{keyword}

\end{frontmatter}
\section{Introduction}
A relational structure is called {\em ultrahomogeneous} if any local isomorphism between finite induced substructures can be extended to an automorphism. This property, which can be traced back to Cantor's work, was developed to its current form by \Fraisse~\cite{Fraisse:1953}, and has been studied from model- and group-theoretic perspectives; more recently, homogeneous structures and, more generally, $\omega$-categorical relational structures have been studied in the context of CSPs by Bodirsky~\cite{Bodirsky:2008}. Homogeneous structures possess a high degree of symmetry: intuitively, all finite isomorphic induced substructures are sitting in the same way within the structure; more precisely, the isomorphism type of a finite induced $H\subset M$ determines its $\aut M$-orbit. 

In the case of graphs, ultrahomogeneity can be understood as a generalization of transitivity; for example, vertex- and edge- transitivity can be thought of as ultrahomogeneity restricted to vertices or edges. Such a strong symmetry condition leads in many cases to a very small set of structures satisfying this property. For example, the class of finite graphs contains, up to complementation, only a few ultrahomogeneous elements: the cycle of length $5$, the line graph $L(K_{3,3})$, and disjoint unions of cliques of the same size (Gardiner~\cite{Gardiner:1976}). Moving towards richer structures (i.e., more relations or higher arity) usually results in larger classes of ultrahomogeneous structures, although it is a well-known fact that there are uncountably many infinite ultrahomogeneous directed graphs, see Henson~\cite{Henson:1971}. A different approach is represented by a notion of relational complexity evaluating minimal number of additional relations needed to make a structure ultrahomogeneous~\cite{Cherlin:2016,Hartman:2015}. For such problem we have a structure on input and find a particular way to homogenize maintaining its automorphism group, while in classification problems as those mentioned above we fix a language and search for homogeneous structures.

We consider only relational structures and use $\mathcal L$ to denote their language. If the language is not obvious from the context or we need to stress it, we use standard notion of $\mathcal L$-structure. By changing from the category of $\mathcal L$-structures with embeddings to the category of $\mathcal L$-structures with homomorphisms, we obtain the definition of homomorphism-homogeneity, introduced by Cameron and Ne\v{s}et\v{r}il~\cite{CameronNesetril:2006}. A relational structure is {\em homomorphism-homogeneous} if any local homomorphism can be extended to an endomorphism. This type of homogeneity has been less studied than ultrahomogeneity, partly due to its relatively recent introduction; at the time of this writing, a complete classification of countable homomorphism-homogeneous undirected graphs is still missing. Among existing classifications of homomorphism-homogeneous relational structures one can name the classifications of partially ordered sets~(Ma\v sulovi\'c \cite{Masulovic:2007}, Cameron-Lockett~\cite{CameronLockett:2010}) and transitive tournaments~(Ili\'c, Ma\v{s}ulovi\'{c} and Rajkovi\'{c}~\cite{IlicEtAl:2008}). Classifications themselves have interesting algorithmic aspects from the viewpoint of recognition. If we restrict ourselves to directed graphs with loops allowed we can define a decision problem which resolves whether graphs of this type are homomorphism-homogeneous as follows:

\begin{verbatim}HOMHOM-DIRECTED-LOOPS\end{verbatim}
\noindent {\em Instance:} Any countable directed graph $G$ with loops allowed\newline
\noindent {\em Question:} Decide whether this graph is homomorphism-homogeneous.\newline

This particular problem has been shown to be {\em co-NP} complete by Rusinov and Schweitzer~\cite{RusinovSchweitzer:2010}, but the corresponding decision problem for partially ordered sets can be solved in polynomial time using the classification results. Additionally to this and similarly to the case mentioned above, homomorphism-homogeneity has been studied as a property of templates of CSPs~\cite{CMPech:2011}. 

Already in the original paper of Cameron and Ne\v{s}et\v{r}il~\cite{CameronNesetril:2006}, several types of homomorphism-homogeneity were defined, according to the type of homomorphism used. We say that an $\mathcal L$-structure $G$ belongs to:
\begin{enumerate}
\item $\HH_{\mathcal L}$ if every homomorphism from a finite induced substructure of $G$ into $G$ extends to an endomorphism of $G$;
\item $\MH_{\mathcal L}$ if every monomorphism from a finite induced substructure of $G$ into $G$ extends to an endomorphism of $G$;
\item $\MM_{\mathcal L}$ if every monomorphism from a finite induced subgraph of $G$ into $G$ extends to an injective endomorphism of $G$.
\end{enumerate}

We will not mention the language when it is clear from the context or when we use these properties as an adjective. Abbreviated statements like ``$G$ is \MH'' or ``$G$ is \MH-homogeneous'' mean that the structure $G$ belongs to the class $\MH_{\mathcal L}$, where $\mathcal L$ is the appropriate language (which further below will be identified with one or two finite partial orders). We will also use the general term {\em morphism extension classes} to denote these classes if they are discussed in general.

The general motivation for this paper is the unresolved relationship between morphism extension classes in the sense of inclusion. For example, we can easily see that \HH is always a subclass of \MH, because any monomorphism is also homomorphism. The other relationships between morphism extension classes can be unclear for specific classes of relational structures. More specific motivation is the relationship between \MH and \HH. In the original paper, Cameron and Ne\v{s}et\v{r}il~\cite{CameronNesetril:2006} asked to elucidate the exact relationship between these classes for undirected graphs without loops. Later Rusinov and Schweitzer~\cite{RusinovSchweitzer:2010} showed that for this class \MH=\HH. Following these results Hartman, Hubi\v{c}ka and Ma\v{s}ulovi\'{c}~\cite{Hartman:2014} provided examples of finite $L$-colored graphs that finally distinguish these classes for finite structures. $L$-colored graphs are defined as undirected graphs with sets of colors assigned to vertices as well as edges. Along with these results they have also shown that for many subclasses of $L$-colored graphs \MH and \HH coincide, which has led the authors to ask:

\begin{problem}\label{problem:mh_eq_hh_finite}~
\begin{enumerate}
\item{Do \MH and \HH coincide for infinite $P,Q$-colored graphs? (see Definition \ref{def:Lcol})}
\item{Do \MH and \HH coincide for infinite $P$-colored graphs with all vertices uncolored?}
\end{enumerate}
\end{problem}

The answers to both questions are in this paper: the classes coincide for countably infinite structures if and only if $L$ is a linear order (Theorem \ref{thm:mainthm}). 

Section \ref{sec:defs} contains the main definitions and some notations. Due to the way in which we set up our structures (the ``language" is a pair of partial orders), the reader should pay particular attention to the definition of homomorphism between colored graphs (Definition \ref{def:hom-l-graph}). Section \ref{sec:examples} contains two worked examples that illustrate the basic techniques employed in Section \ref{s:mec}, where our arguments are shown. 

\section{Studied structures}\label{sec:defs}
As mentioned in the introduction, we will be dealing with $L$-colored graphs. The original definition for an $L$-colored graph is 

\begin{definition}
Let $L$ be a partially ordered set. An $L$-colored graph is an ordered triple $(V,\chi',\chi'')$ such that $V$ is a nonempty set, $\chi':V\to L$ is an arbitrary function and $\chi'': V^2\to L$ is a function satisfying:
\begin{enumerate}
	\item{$\chi''(x, x) = 0$, and}
	\item{$\chi''(x, y) = \chi''(y, x)$.}
\end{enumerate}
A multicolored graph is an $L$-colored graph, where $L$ is the powerset of some set with set inclusion as the ordering relation. 
\end{definition}

It is implicit in the usage of this definition that $L$ is partitioned in two classes: one for colors that can be used on vertices, and the other for colors that can be assigned to edges. We will use a slightly modified definition that makes explicit this separation, corresponding to the definition in \cite{Hartman:2014} of multicolored graphs, with a minor (and innocuous) difference: instead of assigning sets of colors to the vertices and edges and using the standard definition of homomorphism, we will assign elements from partial orders with the convention that a homomorphism can map a vertex (edge) of color $c$ to a vertex (edge) of color $c'$ whenever $c\leq c'$. One can easily see the equivalence of these two definitions: in the original one, the coloring maps take values in the partially ordered set $\mathcal P(L)$, where $L$ is a set of colors, so it matches our definition; on the other hand, we can convert a graph colored with partial orders $P$ (for vertices) and $Q$ (for edges) into a structure of the type from \cite{Hartman:2014} by assigning to each vertex (edge) the set of colors $\{c': c'\leq c\}$, where $c$ is the color that the vertex (edge) takes. The formal definitions are:

\begin{definition}\label{def:Lcol}
A $P,Q$-colored graph is a tuple $(V,P,Q,\chi,\xi)$, where $V$ is a vertex set, $P$ and $Q$ are two disjoint finite partially ordered sets, $\chi:V\to P$ is an arbitrary function, and $\xi: V^2\to Q$ is a symmetric function with $\xi(v,v)=\mathbb 0$ for all $v\in V$. Our partial orders are always finite and have a minimum element $\mathbb 0$ (corresponding to uncolored vertices and nonedges).

We will say that a $P,Q$-colored graph $M$ is vertex-uniform if $\chi$ is constant.
\end{definition}

\begin{definition}\label{def:hom-l-graph}
A homomorphism between $(G,P,Q,\chi,\xi)$ and $(H,P,Q,\chi',\xi')$ is a function $f:G\to H$, such that for all $v\in G$, $$\chi(v)\leq_P\chi'(f(v))$$ and for all pairs $\{x,y\}\in G^2$, $$\xi(x,y)\leq_Q\xi'(f(x),f(y)).$$ 
\end{definition}

Note that in Definition~\ref{def:hom-l-graph} we require the same ``language," i.e., the same pair of partial orders, in both structures. We will use symbols such as $\MH_Q$ and $\MH_{P,Q}$ to denote the class of $Q$- or $P,Q$-colored graphs that are \MH. We adopt the convention that if only one partial order is mentioned, then the structures in the class have uncolored vertices.

In the course of our proofs, we will often need to consider the relations that a vertex satisfies with respect to the elements of some finite set and the conditions for extensibility of a homomorphism to a given vertex. To facilitate the discussion, we reserve the symbols $\varphi_{x,F}$ and $\zeta_{f,c}$ defined below.

\begin{definition}~
\begin{enumerate}
\item{Let $v$ be a vertex in an $L$-colored graph $M$ with edge coloring given by $\xi$, and $A$ a finite subset of $M\setminus\{v\}$. The \emph{diagram} of $v$ over $A$ is the function $$\varphi_{v,A}:A\to L$$ given by $$\varphi_{v,A}(u)=\xi(v,u).$$ }
\item{Suppose that $f:A\to B$ is a finite surjective monomorphism and $c\notin A$. Define $\zeta_{f,c}:B\to L$ by $$\zeta_{f,c}(b)=\varphi_{c,A}(f^{-1}(b)).$$}
\item{Given two functions $\varphi,\varphi':A\to L$, we will write $\varphi\preceq\varphi'$ if for all $a\in A$, $\varphi(a)\leq\varphi'(a)$}
\end{enumerate}
\end{definition}

Observe that for an $L$-colored graph, where $L$ is composed of partial orders for vertices $P$ and edges $Q$, a surjective finite monomorphism $f:A\to B$ can be extended to a vertex $c\notin A$ iff there exists $d$ such that $\zeta_{f,c}\preceq\varphi_{d,B}$ and $\chi(c)\leq_P\chi(d)$.

\section{Two worked examples}\label{sec:examples}
In this section we introduce the techniques that we will employ in the main proofs by means of two basic examples. 

\begin{definition} 
For any $n\geq 1$, $F_n$ is the partial order consisting of an antichain of size $n$ and a minimum element $\mathbb 0$.
\end{definition}

\begin{definition}
A partial order $(P,<)$ is a directed (downwards-directed) set if for any pair of elements $p,q\in P$ there exists $r\in P$ such that $p<r,q<r$ ($r<p,r<q$). 
\end{definition}

Our partial orders are assumed to have a minimum element $\mathbb 0$, and so they are automatically downwards-directed sets.

From this point on, $\mathcal C_n$ and $\Rad_n$ will denote, respectively, the class of finite graphs with edges colored by $F_n$ and its \Fraisse limit ($\mathcal C_n$ is a \Fraisse class with free amalgamation). Consider the following property:\newline

\hspace{-0.5cm}\begin{tabular}{ l l }
  \trgn{n} & 
  \begin{minipage}{0.89\textwidth}
  If $G_1,\ldots,G_{n+1}$ are finite disjoint subsets of $\Rad_n$, then there exists $x\in\Rad_n\setminus G_{n+1}$ such that each vertex of $G_i$ is related to $x$ by an edge of color $c_i$ for $1\leq i\leq n$, and for each vertex $y$ of $G_{n+1}$, the pair $xy$ is a nonedge.
  \end{minipage}
\end{tabular}\newline

\noindent Note that \trgn{n} is equivalent to the following statement:
\begin{center}
For all finite $A\subset\Rad_n$ and all functions $\varphi:A\to F_n$, $\mathcal\Rad_n\models\exists x(\varphi_{x,A}=\varphi)$,
\end{center}

\begin{fact}\label{lem:extprop}~
\begin{enumerate}
\item{$\Rad_n$ is homogeneous and every countable $F_n$-colored graph embeds into $\Rad_n$ as an induced subgraph.}
\item{$\Rad_n$ is the unique countable graph satisfying \trgn{n}, up to isomorphism.}
\end{enumerate}
\end{fact}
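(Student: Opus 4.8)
The statement to prove is Fact~\ref{lem:extprop}: that $\Rad_n$ (the \Fraisse limit of $\mathcal C_n$, finite graphs edge-colored by $F_n$) is homogeneous with the property that every countable $F_n$-colored graph embeds into it, and that $\Rad_n$ is the unique countable model of \trgn{n} up to isomorphism. The plan is to deploy the standard \Fraisse-theoretic machinery, since $\mathcal C_n$ has already been asserted to be a \Fraisse class with free amalgamation.

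\smallskip

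\noindent\textbf{Part (1).} First I would recall that $\mathcal C_n$ is closed under isomorphism, substructures, and has only countably many isomorphism types (the edge colors come from the finite set $F_n$), and that it has the hereditary property (\HP) and the amalgamation property (\AP) — indeed free amalgamation works because one can take the union of two colored graphs over a common part and color all the new pairs with $\mathbb 0$, and this lies in $\mathcal C_n$. The joint embedding property (\JEP) is the special case of amalgamation over the empty structure. By \Fraisse's theorem there is then a unique (up to isomorphism) countable homogeneous structure $\Rad_n$ whose age is exactly $\mathcal C_n$; homogeneity is part of this conclusion. For the embedding claim: any countable $F_n$-colored graph $G$ is the union of an increasing chain of finite induced substructures $G_1\subseteq G_2\subseteq\cdots$, each of which lies in $\mathcal C_n = \age{\Rad_n}$; using homogeneity of $\Rad_n$ one builds, by a back-and-forth-style induction (only the ``forth'' direction is needed here), a coherent sequence of embeddings $G_k\hookrightarrow\Rad_n$ whose union embeds $G$ into $\Rad_n$ as an induced subgraph. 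This is the universality half of \Fraisse's theorem, so I would just cite it rather than reprove it.

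\smallskip

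\noindent\textbf{Part (2).} The reformulation already noted in the text — that \trgn{n} says: for every finite $A\subseteq\Rad_n$ and every $\varphi\colon A\to F_n$ there is $x$ with $\varphi_{x,A}=\varphi$ — is precisely the extension property (one-point extensions) that characterizes the \Fraisse limit. So the key step is to show that a countable $F_n$-colored graph $M$ satisfies \trgn{n} if and only if $M\cong\Rad_n$. The ``only if'' direction follows from Part (1) once we know $\Rad_n$ has the extension property, which is immediate from homogeneity plus the fact that any finite colored graph together with one extra vertex realizing an arbitrary diagram $\varphi$ still lies in $\mathcal C_n$. For the ``if'' direction, one checks that \trgn{n} forces $\age{M}=\mathcal C_n$ (every finite colored graph is built up one vertex at a time, each step being an instance of the extension property) and then runs a back-and-forth argument between any two countable models of \trgn{n}: given a finite partial isomorphism $p\colon A\to B$ and a new point $a$, the extension property applied in the target to the set $B$ and the diagram $\zeta_{p,a}$ produces an image for $a$ extending $p$; symmetrically for the other side. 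Since both $M$ and $\Rad_n$ are countable and satisfy \trgn{n}, the union of the back-and-forth chain is an isomorphism, giving uniqueness.

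\smallskip

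\noindent I do not expect a serious obstacle here: the content is entirely the verification that $\mathcal C_n$ is a free-amalgamation \Fraisse class (routine, and already asserted in the excerpt) together with the standard translation between homogeneity, universality, and the one-point extension property \trgn{n}. The only place demanding a little care is making sure the diagram function $\varphi$ (resp.\ $\zeta_{p,a}$) in the extension property is matched up correctly with the colored-graph amalgamation step — i.e., that adjoining a vertex with a prescribed $F_n$-valued diagram over a finite set never leaves the class — but since $F_n$ has a minimum $\mathbb 0$ and there are no constraints among edge colors beyond symmetry, every such one-point extension is again in $\mathcal C_n$, so this is immediate.
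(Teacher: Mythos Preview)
Your proposal is correct and is exactly the standard \Fraisse-theoretic argument one would expect. The paper, however, gives no proof at all: the statement is presented as a \emph{Fact} (a standard consequence of \Fraisse's theorem together with the observation that $\mathcal C_n$ is a free-amalgamation class), so your write-up simply supplies the routine details the authors chose to omit.
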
 

\begin{example}[\Fraisse limits that are \MH- but not \HH-homogeneous]\hfill\label{ex:frlim}
We claim that $\Rad_n$ is \MH- but not \HH-homogeneous for $n\geq2$. The failure of \HH-homogeneity is easy to see, as for any nonedge $uv$ there is some $a$ such that $(a,u)$ and $(a,v)$ get different colors. Clearly, the homomorphism $u\mapsto u, v\mapsto u$ cannot be extended to $a$.
\MH-homogeneity follows easily from the extension property \trgn{n} above: given a surjective finite monomorphism $f:A\to B$ and $c\notin A$, use \trgn{n} to find a vertex $a'$ with $\varphi_{a',B}=\zeta_{f,c}$, and add the pair $(a,a')$ to $f$.
\end{example}

We will modify this example in Lemma \ref{thm:lattice} to prove that the class $\MH_Q$ contains elements that are not in $\HH_Q$ if $Q$ is not a directed set. 

\begin{definition}\label{def:diamond}
Given $n\geq 2$, $D_n$ is the partial order consisting of a finite antichain of size $n$, a minimum element $\mathbb 0$, and a maximum element $\mathds 1$. 
\end{definition}

In \cite{Hartman:2014}, Hartman, Hubi\v cka and Ma\v sulovi\'c proved that in the class of finite vertex-uniform graphs $\MH_{D_n}=\HH_{D_n}$. The following example shows that we do not have such equality for infinite graphs.

\begin{example}[An \MH but not \HH graph colored by a diamond]\label{ex:ex2}\hfill\\
We will now describe a structure $M$ colored by $D_2$ that is \MH but not \HH.
\begin{enumerate}
\item{Start with a countably infinite clique of color $\mathds 1$ partitioned into six countably infinite cliques $M_x^0, M_x^1$ for $x\in\{a,b,c\}$; for simplicity, use $M_x$ to denote $M_x^0\cup M_x^1$.}
\item{Add three new vertices $a,b,c$ and connect $x \in \{a,b,c\}$ to $M_x$ with edges of color $\mathds 1$.}
\item{Connect with color $R$ the cliques $M_a^0$ to $b$, $M_b^0$ to $c$, $M_c^0$ to $a$, and all other edges from a clique with superindex 0 to an element of $\{a,b,c\}$ with color $B$. The colors are reversed for the $M_x^1$, that is, if $M_x^0$ is connected to $y$ by color $R$, then $M_x^1$ is connected to $y$ in color $B$ ($x\in\{a,b,c\}, y\in\{a,b,c\}\setminus\{x\}$).}
\item{There are no other edges in $M$.}
\end{enumerate}
Note that $\{a,b,c\}$ forms an independent set and contains all the nonedges in $M$.  
\begin{center}
\includegraphics[scale=0.8]{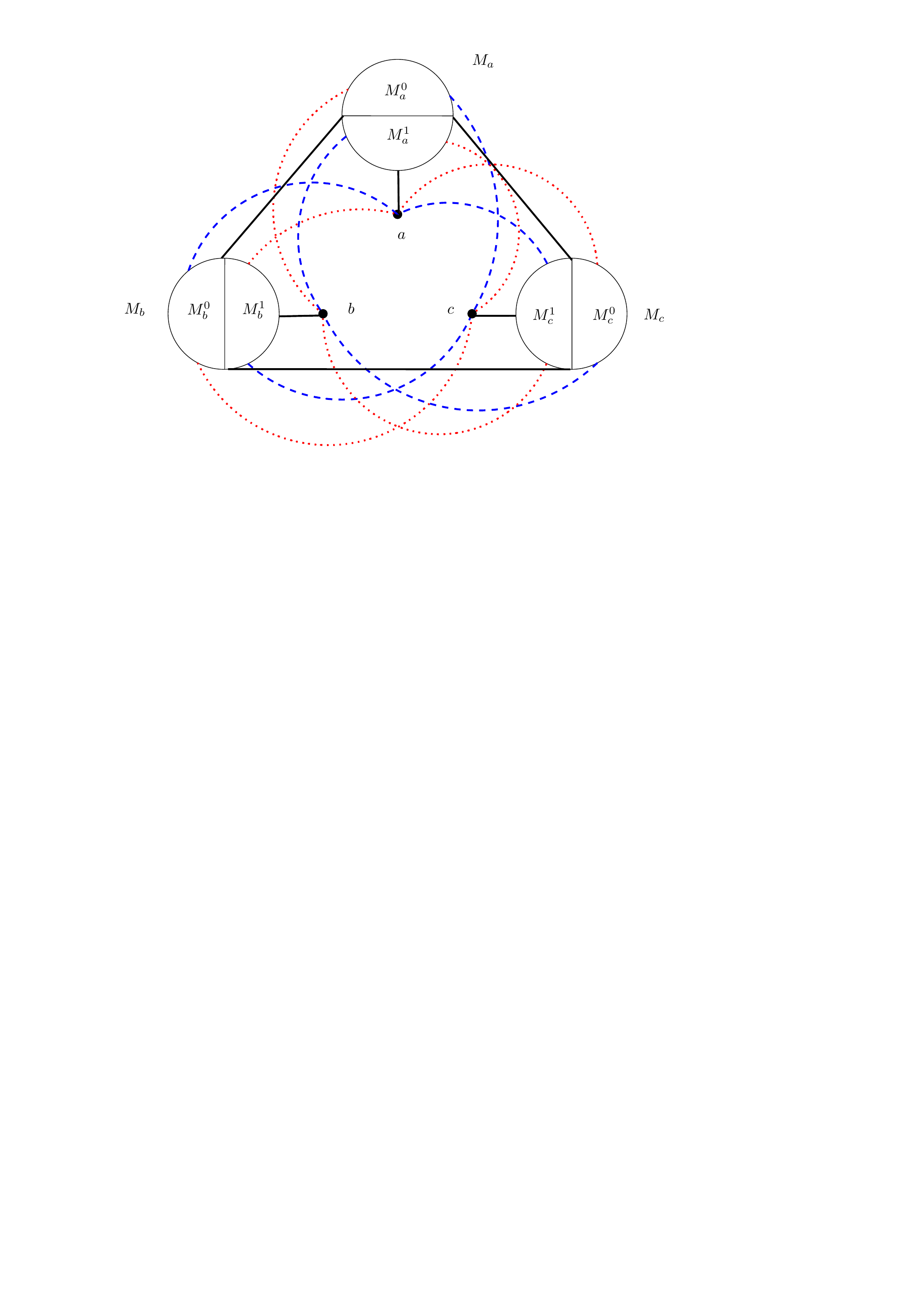}
\captionof{figure}{The structure $M$. Structures $M_x$ where $x\in\{a,b,c\}$ represents cliques in color $\mathds 1$. Edges between a vertex and a substructure or between two substructures represent complete bipartite graph. Solid black lines represent edges of color $\mathds 1$, dashed lines represent color $B$ and dotted lines color $R$.}
\end{center}
We will use the function $\pi:M_a\cup M_b\cup M_c\to\{a,b,c\}$ sending a vertex $v$ to the unique element of $\{a,b,c\}$ to which it is connected by an edge of color $\mathds 1$. Also, define $s(v)$ as 0 if $v\in M^0_{\pi(v)}$ and 1 if $v\in M^1_{\pi(v)}$.

\begin{proposition}
$M$ is \MH and not \HH.
\end{proposition}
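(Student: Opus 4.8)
The non-\HH part is the easy direction, and I would dispatch it first by exhibiting a single bad homomorphism. Since $\{a,b,c\}$ is an independent set, the map $h$ that is the identity on everything except that it sends some $v\in M_a^0$ to a vertex $v'\in M_a^1$ (or more simply, collapses a nonedge: $b\mapsto c$, identity elsewhere) is a homomorphism from a finite induced substructure, because nonedges have color $\mathbb 0$ which is below everything. But $a$ is joined to $M_a^0$ by $R$ and to $M_a^1$ by $B$ (or: $a$ is joined to $b$ by some color and to $c$ by the opposite color), so no image can simultaneously sit above an $R$-edge and a $B$-edge to the same vertex in $D_2$, since $R$ and $B$ are incomparable with only $\mathds 1$ above both — but the edge in question would have to equal both, which is impossible. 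Hence the homomorphism cannot be extended to $a$, so $M\notin\HH$.

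For the \MH direction, the strategy is: take a finite surjective monomorphism $f\colon A\to B$ with $A,B$ finite induced substructures of $M$, take $c\notin A$, and produce $d\in M$ with $\zeta_{f,c}\preceq\varphi_{d,B}$ (vertices are uncolored here, so the $\chi$ condition is vacuous), then extend $f$ by $(c,d)$; iterating handles arbitrary finite monomorphisms. The key structural observation to set up is that $M$ has very few ``types'': a vertex is either one of $a,b,c$, or lies in some $M_x^s$, and the colour of an edge from a vertex $v\in M_x^s$ to another vertex is $\mathds 1$ if the other vertex is in $M_x$ or is $x$, and is determined by the rule in item (3) if the other vertex is in $\{a,b,c\}\setminus\{x\}$, and is $\mathbb 0$ otherwise (there are no such ``otherwise'' edges among the $M_x$'s actually — every pair inside the big clique has colour $\mathds 1$). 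I would split into cases according to whether $c$ is (i) one of $a,b,c$, or (ii) an element of some $M_x^s$.

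In case (ii), $c\in M_x^s$: the diagram $\varphi_{c,A}$ assigns $\mathds 1$ to every element of $A$ lying in the big clique (i.e. in $\bigcup M_y$), assigns $\mathds 1$ or $B$ or $R$ to the elements of $A\cap\{a,b,c\}$ according to the item-(3) pattern, and $\mathbb 0$ to nothing (since $c$ is adjacent to all of the clique). Because the cliques $M_y^t$ are each countably infinite, I can find a vertex $d$ in the appropriate $M_y^t$ that is $\mathds 1$-adjacent to all the (finitely many) clique-elements of $B$; the only constraint is then on the at most three elements $B\cap\{a,b,c\}$, and I must choose $y,t$ so that the $R/B$-pattern from $d$ to those dominates the required $\zeta_{f,c}$-values. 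Since $R,B\leq\mathds 1$, any required value $\le\mathds 1$ is automatically fine, and the only genuine constraints are ``$=R$'' or ``$=B$'' requirements coming from $f$-preimages whose own diagram had $R$ or $B$; here the six-fold partition $M_x^0,M_x^1$ is exactly what provides enough freedom — for each of the three potential elements of $B\cap\{a,b,c\}$ I can independently realise $R$ or $B$ by the choice of superindex, and realise $\mathds 1$ by choosing $y$ equal to that element. In case (i), $c\in\{a,b,c\}$, the diagram of $c$ over $A$ has value $\mathds 1$ on $A\cap M_c$, the item-(3) values on $A\cap(M_a\cup M_b\cup M_c)\setminus M_c$ depending on superindices, $\mathbb 0$ on $A\cap(\{a,b,c\}\setminus\{c\})$, and $\mathbb 0$ again on the rest; I then take $d$ to be whichever of $a,b,c$ makes the pattern of $R/B/\mathds 1/\mathbb 0$-edges to $B$ dominate $\zeta_{f,c}$ — and if no vertex of $\{a,b,c\}$ works, I fall back to a vertex of the big clique, which sees everything with colour $\ge$ what is required because $\mathds 1$ dominates everything.

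The main obstacle I anticipate is the bookkeeping in case (ii): one has to check that a single monomorphism $f$ cannot force incompatible demands — e.g. that $\zeta_{f,c}$ does not simultaneously require colour $R$ and colour $B$ on the \emph{same} element of $B\cap\{a,b,c\}$. This is where one uses that $f$ is a monomorphism on an \emph{induced} substructure: the $\zeta_{f,c}$-value at $b'\in B$ is $\varphi_{c,A}(f^{-1}(b'))$, a single well-defined colour, so no conflict at a fixed target arises; conflicts could only arise across different targets, and those are resolved independently by the choice of $d$ as explained. I would also need to confirm the degenerate sub-cases (e.g. $B$ contains two elements of the same $M_x$ with different superindices, forcing $d$ into a specific clique, or $B$ meeting all three of $a,b,c$) do not over-constrain $d$; here the fact that $\mathds 1$ is the top colour and the big clique is $\mathds 1$-complete means a clique vertex is a universal fallback, so the only real work is squeezing out the $R$ vs.\ $B$ distinctions, and the six cliques give precisely enough room.
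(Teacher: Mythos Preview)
Your not-\HH argument contains a genuine error rooted in a misreading of the construction. You claim that ``$a$ is joined to $M_a^0$ by $R$ and to $M_a^1$ by $B$'' and alternatively that ``$a$ is joined to $b$ by some color and to $c$ by the opposite color''. Both are false: by item~(2) of the construction, $a$ is joined to all of $M_a$ by $\mathds 1$, and $\{a,b,c\}$ is an independent set, so $\xi(a,b)=\xi(a,c)=\mathbb 0$. More seriously, your proposed obstruction---that no single edge color lies above both $R$ and $B$---is not an obstruction at all: $\mathds 1$ does. Collapsing a single nonedge $b\mapsto c$ can always be extended, since any vertex of $M_c$ is a $\mathds 1$-neighbor of $c$. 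The paper's counterexample is the map $a\mapsto a,\ b\mapsto a,\ c\mapsto c$ on the three-element substructure $\{a,b,c\}$: extending to any $d\in M_c^0$ fails because the collapsed $R$/$B$-edges $da,db$ force $\xi(f(d),a)=\mathds 1$, while the edge $dc$ of color $\mathds 1$ forces $\xi(f(d),c)=\mathds 1$, and no vertex is a common $\mathds 1$-neighbor of $a$ and $c$. The essential ingredient you are missing is that a \emph{third} special vertex must be present in the domain to anchor a $\mathds 1$-constraint that rules out the big-clique fallback.

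Your \MH strategy---splitting on whether the new vertex lies in $\{a,b,c\}$ or in some $M_x^s$---is different from the paper's, which instead organizes the case analysis around $f\upharpoonright\{a,b,c\}$ (how many special vertices lie in the image, and which partial bijection $f$ induces there), yielding six explicit sub-cases. Your route can be made to work, but two of your justifications are loose. In case~(ii), you say you can ``independently realise $R$ or $B$ by the choice of superindex'': in fact a single choice of $(y,t)$ fixes \emph{both} non-$\mathds 1$ edges from $d\in M_y^t$ to $\{a,b,c\}\setminus\{y\}$, so they are not independent; what saves you is that the required values at distinct elements of $B\cap\{a,b,c\}$ are themselves distinct (since preimages lie in $\{a,b,c\}$ once $|B\cap\{a,b,c\}|\ge 2$, and $v$ has pairwise distinct colors to $a,b,c$), so some $(y,t)$ realizes all of them simultaneously. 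In case~(i), your fallback ``a vertex of the big clique sees everything with colour $\ge$ what is required because $\mathds 1$ dominates everything'' is not literally true: such a vertex sees two of $a,b,c$ with colors $R,B$, not $\mathds 1$. The argument still goes through because when $|B\cap\{a,b,c\}|\ge 2$ the required values there are $\mathbb 0$, and when $|B\cap\{a,b,c\}|\le 1$ you can place $d$ in the correct $M_z$---but this needs to be said.
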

\begin{proof}
To see that $M$ is not \HH, note that the homomorphism
\[
a\mapsto a, b\mapsto a, c\mapsto c
\]
cannot be extended to any $d\in M_c^0$, since the image of $d$ would have to be a common neighbor of $a$ and $c$ in color $\mathds 1$.

Now we prove that $M$ is \MH. Suppose that $f:H\to H'$ is a surjective monomorphism between finite substructures of $M$. Write $H_a,H_b,H_c$ for $H\cap M_a, H\cap M_b, H\cap M_c$, and similarly for $H'_a$ etc.

We can eliminate many cases by noting (1) that for every finite substructure $A$ containing at most one vertex from $\{a,b,c\}$ there are infinitely many vertices satisfying a constant cone of color $\mathds 1$ over $A$, so all homomorphisms with this type of image can be extended, and (2) that if $f\upharpoonright\{a,b,c\}$ is a bijection, then the sets $H_x$ are mapped into $M_{f(x)}$ for $x\in\{a,b,c\}$, so those homomorphisms can also be extended easily.

We may therefore assume that the image of $f$ contains exactly two elements from $\{a,b,c\}$, without loss $b$ and $c$. We will focus on the effect of $f$ on $\{a,b,c\}$. Consider $d\notin H$ and observe that $\mathds 1(d,k)$ for all $k\in H\setminus\{a,b,c\}$, and $R(d,x),B(d,y)$, where $\{x,y\}=\{a,b,c\}\setminus\{\pi(d)\}$; if $f$ maps
\begin{enumerate}
\item{$b\mapsto b, c\mapsto c$ and $a$ is mapped to some point in $M\setminus\{a,b,c\}$. Then $d$ can stay in $M^{s(d)}_{\pi(d)}$, and this extends $f$.}
\item{$b\mapsto c, c\mapsto b$ and $a$ is mapped to some point in $M\setminus\{a,b,c\}$. Then move $d$ to a vertex in $M^{1-s(d)}_{f(\pi(d))}$ if $\pi(d)\in\{b,c\}$ and to a vertex in $M^{1-s(d)}_{a}$ otherwise.}
\item{$a\mapsto c, b\mapsto b$ and and $c$ is mapped to some point in $M_c\cup M_a$. Then move $d$ to a vertex in $M_{f(\pi(d))}^{1-s(d)}$ if $\pi(d)\in\{a,b\}$, and leave it in $M_{\pi(d)}^{s(d)}$ otherwise.}
\item{$a\mapsto c, b\mapsto b, c\mapsto v\in M_b$. Then move $d$ to a vertex in $M_c^{1-s(d)}$ if $\pi(d)=a$, leave it in $M_{\pi(d)}^{s(d)}$ if $\pi(d)=c$, and move it to some vertex in $M_b^{1-s(d)}$ if $\pi(d)=b$.}
\item{$a\mapsto c, c\mapsto b, b\mapsto v\in M_a\cup M_b$. Then move $d$ to a vertex in $M^{s(d)}_{f(\pi(d))}$ if $\pi(d)\in\{a,c\}$, and to a vertex in $M^{1-s(d)}_b$ otherwise.}
\item{$a\mapsto c, c\mapsto b, b\mapsto v\in M_c$.Then move $d$ to a vertex in $M^{1-s(d)}_{f(\pi(d))}$ if $\pi(d)\in\{b,c\}$, and to $M^{s(d)}_{c}$ otherwise.}
\end{enumerate}
\end{proof}
\end{example}

A small modification of this example will allow us to prove in Theorem \ref{thm:mainthm} that if $Q$ contains incomparable elements, then $\MH_Q\neq\HH_Q$.

\section{$\HH_{P,Q}=\MH_{P,Q}$ iff $Q$ is a linear order}\label{s:mec}

We can further exploit Example \ref{ex:frlim} to produce an \MH but not \HH vertex-uniform graph that is connected in every maximal color, for any $Q$ that is not a directed set, even when vertex colorings are allowed.

\begin{lemma}\label{lem:partRado}
For any $m\geq 1$, there exists a partition of $\Rad_n$ into $m$ classes $C_1,\ldots, C_m$ such that:
\begin{enumerate}
\item{each $C_i$ is isomorphic to $\Rad_n$, and}
\item{for any finite $A\subset\Rad_n$, $\psi:A\to F_n$, and $k\leq m$, there exists $x\in C_k$ such that $\varphi_{x,A}=\psi$.}
\end{enumerate}
\end{lemma}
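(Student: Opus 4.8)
The plan is to build the partition by a back-and-forth/bookkeeping construction that simultaneously guarantees both requirements. First I would enumerate a list of \emph{tasks} of two kinds: (a) for each finite $A \subset \Rad_n$, each $\psi:A\to F_n$, and each $k\le m$, a task ``witness $(A,\psi)$ in $C_k$''; and (b) for each $i\le m$ and each finite $F_n$-colored graph requirement relevant to verifying $\Rad_n$-ness of $C_i$, a task ``extend $C_i$''. Since $\Rad_n$ is countable and there are only countably many finite subsets and finitely many colorings on each, this master list is countable, so fix an enumeration in which every task appears infinitely often. I would process the tasks one at a time, at each stage having committed only finitely many vertices of $\Rad_n$ to classes; when a task of type (a) comes up, use property \trgn{n} (in the form stated just before Fact~\ref{lem:extprop}) to find a vertex $x\in\Rad_n$, \emph{not yet assigned and outside the relevant finite set}, with $\varphi_{x,A}=\psi$, and put $x$ into $C_k$. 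Here the freedom in \trgn{n} to avoid a prescribed finite set $G_{n+1}$ is exactly what lets us dodge the already-assigned vertices.

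For the tasks of type (b), the point is to ensure each $C_i$, as an induced $F_n$-colored subgraph of $\Rad_n$, itself satisfies \trgn{n}; by Fact~\ref{lem:extprop}(2) this forces $C_i\cong\Rad_n$. Concretely, a type-(b) task will be indexed by a finite subset $B$ of those vertices currently assigned to $C_i$ together with a coloring $\varphi:B\to F_n$, and its job is to adjoin to $C_i$ some new vertex realizing $\varphi$ over $B$ and a nonedge to everything assigned to $C_i$ so far outside $B$ — again available from \trgn{n}. Running all such tasks cofinally guarantees that in the limit each $C_i$ has the extension property, hence is isomorphic to $\Rad_n$. Finally, every vertex of $\Rad_n$ must land in some class: I would interleave into the enumeration a third family of tasks ``assign $v_j$'' for each $j$, which simply places the $j$-th vertex (if still unassigned) into, say, $C_1$; this does no harm since adding vertices to $C_1$ cannot destroy its extension property, and it ensures the $C_i$ form a genuine partition of all of $\Rad_n$.

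The main obstacle, and the thing to be careful about, is the interaction between the two goals: placing a witness for a type-(a) task into $C_k$ adds a vertex to $C_k$ with a \emph{prescribed} diagram over some finite set, and one must check this never conflicts with the requirement that $C_k$ eventually realize \emph{all} diagrams over \emph{all} of its finite subsets. It does not, because \trgn{n} lets us realize any target diagram over any finite subset of $C_k$ no matter what finitely many vertices and edges are already present — the extension property is ``monotone'' in this sense — so no commitment made at a finite stage can ever block a later task. I would also note the one genuinely new point beyond Example~\ref{ex:frlim}: we are partitioning rather than just finding one vertex, so at each application of \trgn{n} the set $G_{n+1}$ is taken to include all finitely many vertices already assigned to classes other than the one we are feeding, which is why the resulting classes are disjoint. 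Assembling these observations, the union of the construction is the desired partition.
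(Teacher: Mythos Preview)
Your argument is correct, but it takes a different route from the paper's. The paper does not partition a pre-existing copy of $\Rad_n$ at all: it starts from a bare countable set $X$ already split into $m$ infinite pieces $C_1,\dots,C_m$, enumerates all pairs $(Y_i,t^i_j)$ of a finite subset $Y_i\subset X$ together with a function $t^i_j:Y_i\to F_n$, and at each step simply \emph{inserts} edges from a fresh vertex in every class so as to realise $t^i_j$ over $Y_i$; at the end the resulting graph and each $C_r$ satisfy \trgn{n}, so Fact~\ref{lem:extprop} identifies the whole thing as $\Rad_n$. Thus the paper uses \trgn{n} only as a recognition criterion, never as a source of witnesses, whereas you invoke \trgn{n} inside the given $\Rad_n$ at every stage to locate an unassigned vertex with the required diagram. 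Your approach matches the literal phrasing of the lemma more closely (you genuinely partition $\Rad_n$) at the price of the avoid-already-assigned bookkeeping; the paper's approach sidesteps that bookkeeping entirely by defining edges rather than finding them, at the price of an a posteriori identification. Two small remarks on your write-up: the type-(b) tasks are actually redundant, since condition~(2) applied with $A\subset C_i$ already yields \trgn{n} for $C_i$ and hence condition~(1); and your type-(c) device of dumping unassigned vertices into $C_1$ is harmless precisely because the type-(a) tasks range over \emph{all} finite $A\subset\Rad_n$, including those that meet the dumped vertices.
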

\begin{proof}
Start with a countably infinite set $X$ partitioned into $m$ infinite classes $C_1,\ldots, C_m$. We will show how to impose a structure on $X$ satisfying the conditions above.

Enumerate the finite nonempty subsets of $X$ as $\{Y_i:i\in\omega\}$, and for each of them let $\{t_j^i:1\leq j\leq(n+1)^{|Y_i|}\}$ be an enumeration of all the functions $Y_i\to F_n$. 

\noindent\underline{Step 0:} Choose a vertex $v_{1,r}^0$ in each class $C_r$, and insert edges in such a way that $\varphi_{v_{1,r}^0,Y_0}=t_1^0$. Continue in the same way for $j\in\{2,\ldots,(n+1)^{|Y_i|}\}$, being careful to select fresh vertices from each class at each step.

\noindent\underline{Step $s+1$:} Suppose that we have carried out the construction up to step $s$, corresponding to the functions over $Y_s$. Consider now $Y_{s+1}$ and the $t_{j}^{s+1}$. As before, insert edges from fresh vertices from each class to $Y_{s+1}$ as prescribed by the $t_j^{s+1}$.

At each point in the construction we have used only finitely many elements from each class, so the next step can always be carried out. Let $G$ be the structure on $X$ with all the edges that are eventually introduced by the process we have described. By construction, $G$ and the structures induced by $G$ on each $C_i$ satisfy \trgn{n}, and are therefore isomorphic to $\Rad_n$ by Fact \ref{lem:extprop}. Given a finite subset $A\subset G$ and $\psi:A\to F_n$, there is some step when we introduced edges from a vertex $x_i\in C_i$ to $A$ with $\varphi_{x_i,A}=\psi$ for each $i\in\{1\ldots,n\}$, so the second condition is also satisfied.
\end{proof}

\begin{lemma}\label{thm:lattice}
Let $P$ and $Q$ be finite partially ordered sets. If $\MH_{P,Q}=\HH_{P,Q}$, then $Q$ is a directed set.
\end{lemma}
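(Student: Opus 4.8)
The plan is to prove the contrapositive: assuming $Q$ is not a directed set, we construct a countably infinite $P,Q$-colored graph $M$ that is \MH\ but not \HH. Since $Q$ lacks an upper bound for some pair of elements, pick a maximal antichain $R_1,\ldots,R_n$ of maximal elements of $Q$ with $n\geq 2$ (these exist and are distinct because $Q$ is finite and not directed). The idea is to build $M$ by starting from $\Rad_n$ (with its edge colors reinterpreted as $R_1,\ldots,R_n$ rather than the bare antichain of $F_n$), then promoting some edge colors up within $Q$, fixing the vertex colors to a single value, and crucially leaving one nonedge $uv$ in place. The failure of \HH\ is then the same trick as in Example~\ref{ex:frlim} and Example~\ref{ex:ex2}: since $R_1$ and $R_2$ are incomparable maximal elements, in $\Rad_n$ there is a vertex $w$ with an $R_1$-edge to $u$ and an $R_2$-edge to $v$; the homomorphism $u\mapsto u,\ v\mapsto u$ (legitimate since $uv$ is a nonedge, i.e.\ color $\mathbb 0$) cannot be extended to $w$, because its image would need an edge to $u$ whose color is $\geq R_1$ and $\geq R_2$ simultaneously, and $R_1,R_2$ have no common upper bound in $Q$.

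For the positive direction I would use Lemma~\ref{lem:partRado} to get extra room: the point is that to extend a monomorphism $f:V\to W$ to a new vertex $x\notin V$, one must find $x'\in M$ realizing the required diagram over $W$, where "required" is computed from $\varphi_{x,V}$ via $f$ and the fact that colors may be mapped upward. Here is where the key choice enters. For each color $c\in Q$ with $c\neq\mathbb 0$, since $c$ lies below some maximal element, define $u(c)=\min\{d\in\{1,\ldots,n\}: c\leq_Q R_d\}$; set $u(\mathbb 0)=0$. Given $f:V\to W$ and $x\notin V$, partition $V$ into $V_0,V_1,\ldots,V_n$ by $V_i=\{v\in V: u(\varphi_{x,V}(v))=i\}$, transport this to a partition $W_i=f[V_i]$ of $W$ (legitimate since $f$ is injective), and then use the extension property of $\Rad_n$ to find $x'$ with $\varphi_{x',W}$ taking color $R_i$ on each $W_i$ for $i\geq 1$ and color $\mathbb 0$ on $W_0$. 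The vertex coloring is no obstruction because all vertices have the same color. One then checks $\zeta_{f,x}\preceq\varphi_{x',W}$: for $v\in V_i$ with $i\geq 1$, $\varphi_{x,V}(v)=c$ with $c\leq_Q R_i$, and $\varphi_{x',W}(f(v))=R_i\geq_Q c$, as needed; for $v\in V_0$ both sides are $\mathbb 0$. Hence $f\cup\{(x,x')\}$ is a monomorphism (indeed, into $M$), and iterating handles any finite monomorphism, so $M\in\MH_{P,Q}$.

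A few technical points must be pinned down. First, after promoting edge colors one needs to confirm that $M$ still has the property that \emph{every} diagram $\psi:W\to\{R_1,\ldots,R_n,\mathbb 0\}$ is realized — but this is immediate from the construction since $M$'s reduct to the $R_i$-colored edges is exactly $\Rad_n$, which satisfies \trgn{n}. Second, one should be slightly careful that when $\varphi_{x,V}(v)$ is below several of the $R_d$, the choice $u(c)=\min\{\ldots\}$ is arbitrary but consistent, and that is all that is required. Third, Lemma~\ref{lem:partRado} is not strictly needed for the bare statement — the plain $\Rad_n$ suffices — but it becomes relevant when one wants the extra structural features (like connectivity in each color, or vertex colorings distributed nontrivially); for the minimal proof I would just cite Fact~\ref{lem:extprop} and Example~\ref{ex:frlim}.

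I expect the main obstacle to be bookkeeping rather than conceptual: ensuring the promoted colors and the definition of $u(c)$ interact correctly so that the realized diagram over $W$ genuinely dominates $\zeta_{f,x}$ in \emph{every} coordinate, including the subtle case where $f$ merges information or where a color sits in the downset of multiple $R_i$. The other place requiring care is verifying that leaving exactly one nonedge $uv$ after the promotion step is compatible with the extension property — i.e.\ that we can promote "enough" edges to make the example interesting (if desired) while still keeping $uv$ and the witness $w$ with $R_1(u,w)\wedge R_2(v,w)$; this is harmless because we are free to promote nothing at all, but if the final theorem wants a connected-in-each-color example the argument must be organized so the promotion does not destroy realizability, which again follows from the $\Rad_n$ reduct being untouched.
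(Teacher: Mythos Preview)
Your proposal is correct and follows the same strategy as the paper: reinterpret $\Rad_n$ with its antichain taken to be the set of maximal elements $R_1,\ldots,R_n$ of $Q$, verify \MH\ via \trgn{n} (finding for each one-point extension a witness whose diagram dominates $\zeta_{f,x}$ by pushing each color up to some $R_{u(c)}$), and witness the failure of \HH\ via a surviving nonedge $uv$ together with a $w$ satisfying $R_1(u,w)\wedge R_2(v,w)$. The only real difference is in the handling of vertex colors: the paper invokes Lemma~\ref{lem:partRado} to split $\Rad_n$ into $|P|$ classes, each still enjoying the full extension property, and assigns a distinct element of $P$ to each class (so the example genuinely uses all of $P$), whereas you take a constant vertex coloring and note---accurately---that Lemma~\ref{lem:partRado} is unnecessary for the bare statement. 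Both routes are valid; yours is the leaner one, the paper's has the cosmetic advantage of exhibiting a counterexample in which every vertex color actually occurs.

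One small wording issue: your phrase ``$M$ still has the property that every diagram $\psi:W\to\{R_1,\ldots,R_n,\mathbb 0\}$ is realized'' is not literally true once nonedges have been promoted (the $\mathbb 0$-coordinates may have been raised), but your actual verification only uses $\zeta_{f,x}\preceq\varphi_{x',W}$, which survives promotion since the $R_i$-coordinates are untouched and anything dominates $\mathbb 0$. You essentially say this yourself later, so the argument is fine; just tighten that sentence.
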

\begin{proof}
We will prove the contrapositive. Supose that $Q$ has $n\geq 2$ maximal elements $R_1,\ldots, R_n$; let $m$ denote $|P|$, and let $M_0$ be the \Fraisse limit of graphs colored by $F_n$. Partition $M_0$ into $m$ copies $C_1,\ldots,C_m$ of itself satisfying the conditions from Lemma \ref{lem:partRado}.

Enumerate the elements of $P$ as $e_1,\ldots,e_m$ and assign color $e_i$ to the elements of $C_i$. Now we carry on inserting edges of non-maximal colors in any way and leaving at least one nonedge of equal-colored vertices. We therefore have a substructure consisting of two equal-colored vertices with a nonedge between them and linked to a third vertex by different edge-colors without a common upper bound. The extension axioms \trgn{n} for maximal colors guarantee monomorphism-homogeneity and the three-vertex substructure witnesses a failure of ho\-mo\-morphism-homogeneity.
\end{proof}

The proof of the following result is a generalization of the construction of Example \ref{ex:ex2}.
\begin{theorem}\label{thm:mainthm}
Let $P$ and $Q$ be finite partially ordered sets. $\MH_{P,Q}=\HH_{P,Q}$ iff $Q$ is a linear order.
\end{theorem}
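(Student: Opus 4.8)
The statement is an ``iff,'' so there are two directions. The harder-looking direction, $Q$ not a linear order $\Rightarrow \MH_{P,Q}\neq\HH_{P,Q}$, actually splits naturally into two cases according to \emph{why} $Q$ fails to be a linear order, and one of the two cases is already done. If $Q$ is not a directed set, Lemma~\ref{thm:lattice} gives the required separating example, so nothing remains to prove there. The outstanding case is: $Q$ \emph{is} a directed set (hence has a maximum element $\mathds 1$, since it is finite) but is \emph{not} a linear order, i.e.\ $Q$ contains two incomparable elements $R$ and $B$. This is exactly the configuration of Example~\ref{ex:ex2}, where $Q=D_2=\{\mathbb 0,R,B,\mathds 1\}$, and the plan is to adapt that construction to an arbitrary directed $Q$ with incomparable $R,B$.

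\textbf{Construction for the main case.} I would reuse the skeleton of $M$ from Example~\ref{ex:ex2} essentially verbatim: six countably infinite cliques $M_x^0,M_x^1$ ($x\in\{a,b,c\}$) all joined by edges of the maximum color $\mathds 1$, three extra vertices $a,b,c$ forming an independent set with each $x$ joined to $M_x$ in color $\mathds 1$, and the ``twisted'' $R/B$ pattern from $M_x^i$ to the vertices of $\{a,b,c\}\setminus\{x\}$ (color $R$ on the $0$-part, $B$ on the $1$-part, reversed between the superindices, using our fixed incomparable pair $R,B\in Q$). Vertex colors are uncolored ($\mathbb 0$) in the vertex-uniform incarnation; if one wants to cover all of $P$, one can additionally partition each clique into $|P|$ copies of a Rado-type graph and color them by the elements of $P$ exactly as in Lemma~\ref{lem:partRado} and Lemma~\ref{thm:lattice}, which changes nothing essential because the vertex-color constraints $\chi(c)\le_P\chi(d)$ are automatically satisfiable whenever we are mapping into a large enough fiber. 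The failure of \HH-homogeneity is witnessed, as in Example~\ref{ex:ex2}, by $a\mapsto a,\ b\mapsto a,\ c\mapsto c$ together with any $d\in M_c^0$: the image of $d$ would have to be joined to both $a$ and $c$ by an edge of color $\ge R$ on one side and $\ge B$ on the other, and since $R,B$ are incomparable there is no color in $Q$ above both — more simply, $d$'s two edges to $a$ and $c$ have colors $R$ and $B$ (or a non-edge), and collapsing $a,b$ forces two incomparable demands on the same edge. The \MH-homogeneity is proved by the same case analysis as in Example~\ref{ex:ex2}: a surjective finite monomorphism $f$ is trivially extendible unless its image meets $\{a,b,c\}$ in exactly two points, and in that case one walks through the (essentially six) possibilities for how $f$ acts on $\{a,b,c\}$, in each case relocating a new vertex $d$ among the $M_x^i$ by flipping superindex $s(d)\mapsto 1-s(d)$ and/or switching fiber $\pi(d)\mapsto f(\pi(d))$ so that the diagram $\zeta_{f,d}$ is realized; the twisted $R/B$ pattern is exactly what makes these relocations available.

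\textbf{Easy direction.} For the converse, if $Q$ is a linear order then one must show every \MH\ $P,Q$-colored graph is \HH. The point is that when colors are linearly ordered, a homomorphism imposes only ``lower-bound'' constraints that are totally ordered, so extending a local homomorphism to a new vertex $v$ amounts to finding a vertex whose diagram dominates a single well-defined target diagram; given a local homomorphism $f:H\to H'$ and $v\notin H$, one factors $f$ through its image and notes that $\zeta_{f,v}$ (read off from $\varphi_{v,H}$ composed with the surjection onto $H'$) is a \emph{genuine} function into $Q$ because linearity lets incomparabilities never arise, and then one builds a monomorphism realizing the same constraints and invokes \MH. Concretely I expect the right statement to be: when $Q$ is linear, for any local homomorphism $f$ there is a local monomorphism $g$ on a (possibly larger) domain such that any endomorphism extending $g$ also extends $f$ — one obtains $g$ by ``separating'' the points of $H$ that $f$ identifies into distinct fresh vertices with compatible diagrams, which exists by an extension/amalgamation argument inside the structure; then \MH\ finishes it. I would want to double-check whether this reduction needs $P$ linear as well, but the paper's own summary (finite case, \cite{Hartman:2014}) and the ``only $Q$ matters'' phrasing of Lemma~\ref{thm:lattice} strongly suggest the vertex order is irrelevant here, essentially because vertex-color constraints, being unary, are always locally satisfiable once edge constraints are.

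\textbf{Main obstacle.} The real work is the \MH-homogeneity verification for the generalized $M$: one has to be sure the six relocation recipes of Example~\ref{ex:ex2} still type-check when the only structure used is ``$R,B$ incomparable, $\mathds 1$ a common upper bound'' rather than the full diamond $D_2$, and — if one insists on realizing all of $P$ on the vertices — that the extra fibering by $P$ does not obstruct any relocation (it does not, because each $M_x^i$ still contains a full Rado-type graph in each $P$-fiber by Lemma~\ref{lem:partRado}, so diagrams over finite sets are always realizable with any prescribed vertex color). A secondary but genuine point is making the linear-order direction airtight: pinning down exactly the ``separate the identified points'' lemma and checking it really does reduce \HH\ to \MH\ when $Q$ is linearly ordered.
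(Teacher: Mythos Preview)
Your plan---lift Example~\ref{ex:ex2} verbatim, choosing any incomparable pair $R,B\in Q$ together with the top element $\mathds 1$ of the directed $Q$---is correct and is in fact simpler than what the paper does. The point is that the four colors $\{\mathbb 0,R,B,\mathds 1\}$ sit inside $Q$ as a copy of $D_2$, and the $M$ of Example~\ref{ex:ex2} uses only those four colors, so the six-case \MH\ verification transfers without change. The paper instead takes \emph{all} $n\geq 2$ maximal elements $P_1,\ldots,P_n$ of $Q\setminus\{\mathds 1\}$, introduces $n{+}1$ special vertices $x_1,\ldots,x_{n+1}$ and $n{+}1$ Rado-type blocks $M_i$, each partitioned into $n!$ parts indexed by permutations $\sigma\in S_n$, with the edge from $v\in M_i^\sigma$ to $x_j$ colored $P_{\sigma(c_i(x_j))}$. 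This yields a structure connected in every color directly below $\mathds 1$, but that extra feature is not needed for the theorem as stated, so your three-vertex, two-color version is a legitimate shortcut. One wording slip: it is false that ``there is no color in $Q$ above both $R$ and $B$''---$\mathds 1$ is one. The obstruction to extending $a\mapsto a,\ b\mapsto a,\ c\mapsto c$ to $d\in M_c^0$ is structural: the image of $d$ must have a $\mathds 1$-edge to $c$, hence lie in $M_c$, and every vertex of $M_c$ has its edge to $a$ colored exactly $R$ or exactly $B$, neither of which is $\geq$ both.

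\textbf{Easy direction.} Here there is a real gap. Your first instinct is the right one and is precisely the paper's argument: by linearity of $Q$, each fiber $f^{-1}(h')$ contains a vertex $h^\ast$ with $\xi(v,h^\ast)=\max\{\xi(v,h):f(h)=h'\}$; the set $s^0$ of these $h^\ast$ is a transversal on which $f$ restricts to a \emph{monomorphism} onto $H'$, and any \MH-extension of $f\upharpoonright s^0$ to $v$ automatically extends all of $f$ (the vertex-color constraint on $v$ comes along for free, so $P$ need not be linear). Your ``concrete'' reformulation, however---``separate the identified points into distinct fresh vertices with compatible diagrams, by an extension/amalgamation argument inside the structure''---does not work: an arbitrary \MH\ structure has no amalgamation property and no reason to contain such fresh vertices. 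Drop that paragraph and make the transversal argument explicit; once you do, the easy direction is a few lines.
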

\begin{proof}
Suppose first that the set of edge-colors is linearly ordered and we have an arbitrary $P,Q$-structure $G$ that is $\MH_{P,Q}$. Let $f:H\to H'$ be a surjective homomorphism, and fix an enumeration $H_1,\ldots,H_k$ of the partition of $H$ induced by $f$ (so the $H_i$ are preimages of a single vertex). Let $S$ be a set of all transversals to the partition $H=\bigcup\{H_i:1\leq i\leq k\}$. Given a vertex $u\notin H$ we can use the linear order on $Q$ to find a transversal $s^0$ such that, for all $s\in S$, $$\varphi_{u,s}\preceq\varphi_{u,s^0}.$$

Each function of the form $f\upharpoonright s$ with $s\in S$ is a monomorphism, so it can be extended as a homomorphism to $u$ by $\MH_{P,Q}$-homogeneity. Note that each $\varphi_{u,H}\upharpoonright s$ is realized in $G$ (by $u$, for example), so extend $f\upharpoonright s^0$ to another realization $u_0$ of $s^0$. We have $\varphi_{u,s}\preceq\varphi_{u,s^0}$, and thus $f\cup\{(u,(f\upharpoonright s^0)(u_0))\}$ is a homomorphism.
\vspace{0.5cm}

Now we will prove that if $Q$ is not a linear order, then there exists a \MH structure $M$ that is not \HH. We will construct an \MH structure $M$, connected in $\mathds 1$ and each of colors $P_i$ directly below $\mathds 1$, that fails to be \HH. The stucture we construct will be vertex-uniform, but if vertex colorings are allowed, we can modify it, giving some maximal color to the special vertices $x_1,\ldots,x_{n+1}$ introduced below and lower or incomparable colors to the other vertices, thus ensuring that the image of $x_i$ is some other special vertex (this actually simplifies the argument).

By Lemma \ref{thm:lattice}, we know that $Q$ is a finite directed set, so it has a top element $\mathds 1$. Let $P_1,\ldots,P_n$ be the top elements of $Q\setminus\{\mathds 1\}$. 

For each $i\in\{1,\ldots,n+1\}$, let $M_i$ be a copy of the Rado graph in color $\mathds 1$ partitioned into $n!$ classes and satisfying the conditions from Lemma \ref{lem:partRado}, with all nonedges filled by predicates from $Q$ in such a way that all predicates are used; this defines the edge-coloring function $\xi$ within each of the $M_i$. Let $x_1,\ldots,x_{n+1}$ be new vertices; now we will explain how to extend $\xi$ to the rest of $((\bigcup\{M_i:1\leq i\leq n+1\})\cup\{x_1,\ldots,x_{n+1}\})^2$.

If $v\in M_i$ and $w\in M_j$ with $i\neq j$, declare $\xi(v,w)=\mathds 1$. Also, for all $v\in M_i$ declare $\xi(v,x_i)=\mathds 1$.

Index the parts of the partition of $M_i$ as $M_i^\sigma$, where indices $\sigma\in S_n$ are elements of symmetric group corresponding to a partition of $M_i$ into $n!$ classes, so $M_i=\bigcup_{\sigma\in S_n}M_i^\sigma$. Define $c_i:\{x_1,\ldots,x_{i-1},x_{i+1},\ldots,x_{n+1}\}\to\{1,\ldots,n\}$ by $$c_i(x_j)=\begin{cases} j &\mbox{if } j\leq i-1\\ j-1 &\mbox{if } j\geq i+1\end{cases}$$ and declare for $v\in M_i^\sigma$ and $j\neq i$, $\xi(v,x_j)=P_{\sigma(c_i(x_j))}$. There are no other edges in $M$.

It is easy to see that $M$ is not \HH. Note that $x_i$ and $x_j$ are either equal or at distance 3 in the predicate $\mathds 1$. Choose distinct $j,k,\ell\in\{1,\ldots, n+1\}$ and $v\in M_j^\sigma$ for any $\sigma\in S_n$. The local homomorphism $x_k\mapsto x_k, x_\ell\mapsto x_k, x_j\mapsto x_j$ cannot be extended to $v$ because the only element above both $P_{\sigma(c_j(x_k))}$ and $P_{\sigma(c_j(x_\ell))}$ is $\mathds 1$, but $x_k$ and $x_j$ do not have a common neighbor in $\mathds 1$.

Now we need to show that $M$ is \MH. Suppose that $f:H\to K$ is a surjective monomorphism between finite substructures of $M$. If $|K\cap\{x_1,\ldots,x_{n+1}\}|\leq 1$, then it is easy to extend the monomorphism to any new vertex $v$ because there are infinitely many elements with all edges of color $\mathds 1$ towards $K$, so we can assume that the image of $f$ contains at least two of the special vertices. Since the pairs $x_i,x_j$ with $i\neq j$ are the only nonedges with distinct endpoints in $M$, we know that if $x_i,x_j\in K$ then their preimages under $f$ are also contained in $\{x_1,\ldots,x_k\}$. 

\begin{claim}\label{claim:exts}
Given any $F\subset M\setminus\{x_1,\ldots,x_{n+1}\}$, $S\subseteq\{x_1,\ldots,x_{n+1}\}$ and injective $t:S\to\{\mathds 1,P_1,\ldots,P_{n}\}$, there exists some $v\in M$ that is connected to all of $F$ by edges of type $\mathds 1$ and satisfies $t$ over $S$, i.e., $\varphi_{v,S}=t$.
\end{claim}
\begin{proof}[Proof of Claim \ref{claim:exts}]
Note that for any finite $F\subset M\setminus\{x_1,\ldots,x_{n+1}\}$ there exists $v_i^\sigma\in M_i^\sigma$ for each $i\in\{1,\ldots,n\}$ and $\sigma\in S_n$ connected to $F$ by edges of type $\mathds 1$. Given any $F, i,\sigma$, we can find $v_i^\sigma$ that is connected by $\mathds 1$ to $F\cap M_i$ by Lemma \ref{lem:partRado}. By the construction of $M$, this vertex will be connected by $\mathds 1$ to the rest of $F$. 

If $t$ takes value $\mathds 1$ in $x_i$, then we need to find a $\sigma$ such that an element of $M_i^\sigma$ is connected to $S\setminus\{x_i\}$ in the correct manner. But this always happens, since all possible connections to $S\setminus\{x_i\}$ appear in $M_i$, since all permutations are present.

And if $\mathds 1$ is not in the image of $t$, then we are free choose any $i$ and find the appropriate $\sigma$.
\end{proof}

Using the Claim above, it is easy to prove that $M$ is \MH. Let $S=K\cap\{x_1,\ldots,x_{n+1}\}$, $t(x_i)=\xi(v,f^{-1}(x_i))$ and $F=K$. A vertex $w$ satisfying $t$ over $S$ and connected by $\mathds 1$ to $F$ extends $f:H\to K$ to the vertex $v\notin H$ as a homomorphism.
\end{proof}

\section*{Acknowledgments}
Andr\'es Aranda received funding from the ERC under the European Union's Horizon 2020 research and innovation programme (grant agreement No 681988, CSP-Infinity).

David Hartman is supported by project P202/12/G061 of the Czech Science Foundation (GA\v{C}R).


\end{document}